\def\today{\ifcase\month\or
  January\or February\or March\or April\or May\or June\or 
  July\or August\or September\or October\or November\or December\fi
  \space\number\day, \number\year}
 \newtheorem{theorem}{Theorem}[section]
 \newtheorem{lemma}[theorem]{Lemma}
 \newtheorem{proposition}[theorem]{Proposition}
 \newtheorem{defi}[theorem]{Definition}
 \newtheorem{remark}[theorem]{Remark}
 \newcommand{\mc}{\mathcal}
 \newcommand{\A}{\mc{A}}
 \newcommand{\M}{\mc{M}}
 \newcommand{\R}{\mathbb{R}}
 \newcommand{\N}{\mathbb{N}}
 \newcommand{\Z}{\mathbb{Z}}
 \newcommand{\p}{\mathbbm{p}}
 \newcommand{\Pe}{\mathbbm{P}}
    \renewcommand{\d}{\text{\rm d}}
\newcommand{\var}{{\rm Var\,}}
\newcommand{\letf}{$f: \R \rightarrow \R^+_0$}
\newcommand{\peak}{$\{ p<r<q \}$}
\newcommand{\ma}{M^{\alpha}}
\begin{document}

\title[]{Study of bounded variation on the H-L nontangential operator}
\author[Fred]{Frederico Toulson}
\date{\today}
\subjclass[2010]{42B25, 26A45, 46E35, 46E39.}
\keywords{Hardy-Littlewood maximal operator, bounded variation.}

\address{
IST - Instituto Superior Técnico, Av. Rovisco Pais, Nº 1 1049-001 Lisboa, Portugal.
address}

\email{email fredericotoulson@tecnico.ulisboa.pt
}

\allowdisplaybreaks
\numberwithin{equation}{section}

\begin{abstract}

In this paper we present a proof of sharp boundedness of the discrete 1-dimensional Hardy-Littlewood nontangential maximal operator, when the parameter is in the range $[\frac{1}{3},+\infty)$. This generalizes a theorem by Bober, Carneiro, Hughes and Pierce, where they prove the same result for the uncentered version of the maximal operator. We also use analogous ideas to give an alternative proof for the continuous version of the theorem, by Ramos.
\end{abstract}

\maketitle

\section{Introduction}
In the study of harmonic analysis, the Hardy-Littlewood maximal operator is a central tool in understanding the behaviour of functions, as well as in many results regarding pointwise convergence.

\begin{defi}[Hardy-Littlewood Maximal Operator]

    Let $f\in L^1_{loc}(\R ^n)$, we define the centered Hardy-Littlewood maximal operator as:
    $$
    Mf(x)= \sup_{r>0} |B_r(x)|^{-1}\int_{B_r(x)} |f(y)|dy,
    $$
where $B_r(x)$ is a ball centered at $x$ with radius $r$, $|B_r(x)|$ is the measure of the ball, and the supremum is taken over all $r>0$.
\end{defi}

We can also define an uncentered version of the same operator, $\widetilde{M}$. The only difference in the definition is that, while the balls in the centered version are centered at  $x$, in the uncentered version the supremum is taken over all balls containing $x$.

%(maybe more context for the basic results on the HLMF)

When studying an averaging operator it is expected that, in some sense, it will smooth the function to which it is applied. Kinnunen initiated the study of results exploring this expectation \cite{Ki}, proving that the operator is bounded on Sobolev spaces $W^{1,p}$ for $ p > 1 $, for both the centered and uncentered versions. Since the operator does not map $L^1$ into $L^1$- more precisely the image of any non-trivial function is never in $L^1$- the result can't be directly generalized down to the endpoint $p = 1$. As a natural alternative to studying the smoothness at the endpoint of $[1,+\infty )$ the question of studying the operator just at the derivative level was considered. Hajlasz and Onninen posed this general question in \cite{HO}: Given $f\in W^{1,1}(\R ^d)$, is $M f$ weakly differentiable? If so, can the $L^1$ norm of the derivative of $M f$ be bounded by the $W^{1,1}$ norm of $f$?

The general answer to these questions is still unknown, but there are partial results. For further exploration of variants of the maximal operator, particularly concerning boundedness and continuity at the derivative level, Carneiro's survey \cite{Ca} has a compilation of results until 2019, to sugest a few like \cite{AP}, \cite{CH}, \cite{CM}, \cite{CMa}, \cite{CMP}, \cite{KiSa}, \cite{Ko}, \cite{LW}, \cite{PPSS}. Other recent relevant results can be seen at \cite{BW},\cite{GR}.

From now on, we are going to restrict our study to the one dimensional case, $d=1$. We can still consider both, the centered, or the uncentered operator, which have very different properties in this context.

The uncentered version has the property that the operator has no maxima in the detachment set. This set refers to the points where the function and its maximal function differ, given an absolutely continuous representative in the equivalence class of $f\in W^{1,1}(\R)$. This property will play a fundamental role in the proofs we will present.

In one dimension, the first results for the uncentered maximal operator were obtained by Tanaka \cite{Ta}. His work showed that for $f\in W^{1,1}(\R)$, $\widetilde{M}f$ is absolutely continuous, and that the $L^1$ norm of the derivative of $\widetilde M f$ can be bounded by twice the $W^{1,1}$ norm of $f$.

A stronger and more general version of this result was later obtained by Aldaz and Pérez Lázaro in \cite{AL}. They were the first to achieve a result for the space of functions with bounded variation. Here $\var (f)$ denotes the total variation of $f$, which generalizes the $L^1$ norm of the derivative for functions that do not have a derivative in a classical sense. They established that, for a function of bounded variation $f$, 
$$
\var(\widetilde M f) \le \var(f).
$$
The same result was also conjectured for the the centered maximal operator. However, it was only several years later, in 2015, that Kurka\cite{Ku} was able to prove the boundedness of the centered maximal operator, albeit with a constant significantly greater than the conjectured $1$.

In 2017, Ramos \cite{JPR} formulated a generalized version of the maximal operator, which contains as particular cases the centered and uncentered operators. 
\begin{defi}[Hardy-Littlewood Nontangential Maximal Operator]
    Given $f\in BV(\R)$, for $\alpha >0$,
    $$ 
    M^{\alpha}f(x) = \sup_{|x-y| \le \alpha t} \frac{1}{2t} \int_{y-t}^{y+t} |f(s)| \d s, $$
    where the supremum is taken over all $t>0$, and $y\in \R$ verifying the condition.
\end{defi}
The general question can now be adapted to this more general operator as: For which values of $\alpha$ do we have that $\var(M^{\alpha}f) \le \var(f)$? 

The best result, due to Ramos \cite{JPR}, is the following:

\begin{theorem}\label{JPGR} Fix $f \in \textit{BV}(\R)$. For every $\alpha \in [\frac{1}{3},+\infty),$ we have that:
\begin{equation}\label{maint}
\var(M^{\alpha}f) \le \var(f).
\end{equation}  
\end{theorem}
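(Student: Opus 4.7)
The plan is to adapt the strategy of Aldaz--Pérez Lázaro from the uncentered setting to the nontangential operator $M^\alpha$. The central structural claim I would prove is the following \emph{no-local-maximum lemma}: for $\alpha \ge 1/3$, after replacing $f$ by its precise representative, $M^\alpha f$ admits no strict local maximum inside the detachment set $D^\alpha := \{x \in \R : M^\alpha f(x) > |f(x)|\}$. Granting this, $M^\alpha f$ is lower semicontinuous so $D^\alpha$ is open and decomposes as a countable union of disjoint open intervals $(a_i, b_i)$; on each such interval $M^\alpha f$ is monotone or V-shaped, with $M^\alpha f(a_i) = |f(a_i)|$ and $M^\alpha f(b_i) = |f(b_i)|$ because the endpoints lie in the contact set $C^\alpha := \R \setminus D^\alpha$. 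A standard telescoping of the partition sums, together with $\var(|f|) \le \var(f)$, then yields $\var(M^\alpha f) \le \var(f)$.

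The heart of the argument is the no-local-maximum lemma, which is where the threshold $\alpha = 1/3$ enters quantitatively. Suppose for contradiction $x_0 \in D^\alpha$ is a strict local maximum with value $V := M^\alpha f(x_0)$. A normal-family/compactness argument, using $f \in L^1_{\mathrm{loc}}$, $V > |f(x_0)| \ge 0$, and lower semicontinuity, produces an \emph{optimal} ball $[y-t, y+t]$ with $|x_0 - y| \le \alpha t$ realizing the supremum. By reflection, assume $y \ge x_0$. If $y - x_0 < \alpha t$, the same ball is admissible for every $x$ in a neighborhood of $x_0$, forcing $M^\alpha f \ge V$ locally and contradicting strict maximality; so the essential case is the extremal one $y - x_0 = \alpha t$. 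Moving $x$ rightward preserves admissibility of this ball, so again the local maximum is violated; the binding direction is $\delta < 0$. I would then construct a competitor ball $[y+\eta-t-\epsilon,\, y+\eta+t+\epsilon]$ with $(\eta,\epsilon)$ chosen linearly in $\delta$ to saturate admissibility $\eta - \delta = \alpha \epsilon$ at the perturbed point $x_0 + \delta$, and show that the resulting average is $\ge V$.

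A first-order expansion of the competitor average gives
\est{
\frac{1}{2(t+\epsilon)} \Bigl[\,2tV + (\eta + \epsilon)\,|f(y+t)| + (\epsilon - \eta)\,|f(y-t)|\,\Bigr] + O(\epsilon^2),
}
and the inequality $\ge V$ reduces, after using the first-order optimality of the original ball (bounds on $|f(y \pm t)|$ from variations of the size $t$ and the center $y$ within the admissibility cone), to a single linear inequality in $\alpha$. The constant $\alpha = 1/3$ is precisely the threshold at which this inequality admits a feasible $(\eta,\epsilon)$ with the required sign for some $\delta$ of at least one chosen orientation. The main obstacle of the proof is exactly this perturbation book-keeping: tracking how the first-order optimality conditions in the extremal direction (where admissibility is binding) and the orthogonal direction (free to enlarge/shift) combine to give the sharp threshold, and handling degenerate sub-cases (non-unique optimal ball, vanishing boundary values of $|f|$) by approximation. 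Once the no-local-maximum lemma is established, the component-wise variation sum over the open set $D^\alpha$ together with the identity $M^\alpha f = |f|$ on $C^\alpha$ closes the proof.
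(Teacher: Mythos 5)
Your overall route --- precise representative, detachment set $D^\alpha=\{M^\alpha f>|f|\}$, exclusion of interior local maxima, and a component-wise telescoping --- is the Aldaz--P\'erez L\'azaro/Ramos strategy rather than this paper's (here one works with finite partitions, decomposes them into systems of peaks for $M^\alpha f$, and for each peak directly manufactures comparison points of $f$ by a pigeonhole on sixths of the optimal interval). The route is viable in principle, but your key lemma has a genuine gap. First, ``no \emph{strict} local maximum'' is the wrong statement: if the supremum at $x_0$ is attained by a ball $[y-t,y+t]$ with $t>0$, that same ball is admissible for every $x\in[y-\alpha t,\,y+\alpha t]$, a nondegenerate interval containing $x_0$, so $M^\alpha f\ge V$ on a whole interval and $x_0$ is never a strict local maximum --- for \emph{any} $\alpha>0$. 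Your lemma is therefore essentially trivially true, cannot be where the threshold $1/3$ enters, and is too weak for the telescoping step: what ruins ``monotone or V-shaped on each component of $D^\alpha$'' is a \emph{plateau} local maximum, i.e.\ a compact subinterval of $(a_i,b_i)$ on which $M^\alpha f$ attains a local maximum value, and strictness excludes none of these.

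Second, the first-order competitor-ball expansion does not produce $1/3$. The actual mechanism is zeroth order: on the plateau $P\supseteq[y-\alpha t,\,y+\alpha t]$ one has $f<V$ (detachment), so by pigeonhole one half of the optimal ball, say $[y-t,y]$, has average at least $V$; this half has center $y-t/2$ and radius $t/2$, hence is admissible exactly for $x\in[y-t/2-\alpha t/2,\,y-t/2+\alpha t/2]$, and this window reaches the left edge $y-\alpha t$ of the plateau iff $\alpha t/2+\alpha t\ge t/2$, i.e.\ $\alpha\ge 1/3$; iterating pushes the plateau out to the boundary of the component of $D^\alpha$, which is the contradiction. Your perturbative bookkeeping with $|f(y\pm t)|$ also presupposes pointwise boundary values and first-order optimality conditions that a general $BV$ function need not satisfy, and the degenerate case $t\to 0$ (where $V$ is a limsup of shrinking averages, which for the precise representative can still exceed $|f(x_0)|$ at a jump) is not actually disposed of by ``approximation''. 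Note too that openness of $D^\alpha$ needs upper semicontinuity of $|f|$, not just lower semicontinuity of $M^\alpha f$, which again requires a careful choice of representative. The skeleton is salvageable, but the central lemma must be restated for plateau maxima and proved by the pigeonhole-on-halves argument, not by perturbation.
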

In this paper, we give an alternative, more elementary proof of this result, which will help in understanding where the endpoint $\alpha = \frac 13$ comes from.

Beyond the one-dimensional result of Ramos, multidimensional generalizations of maximal inequalities have also been proven by Carneiro, Gonz{\'a}lez-Riquelme, and Madrid \cite{CGRM}. 
One can also ask which results hold for discrete versions of the same problems, that is, for functions with domain $\Z$. 
\begin{defi}[Hardy-Littlewood Discrete Maximal Operator]

    Let $f: \Z \rightarrow \R$. We define the centered discrete Hardy-Littlewood maximal operator as:
    \begin{equation}\label{HLDCMO}
        \M f(n)= \sup_{r\geq 0} \frac{1}{2r+1}\sum_{i=n-r}^{n+r} |f(i)|,       
    \end{equation}

where the supremum is taken over $r\in \N$.
\end{defi}

In the discrete setting, we have a standard definition of total variation:

\begin{defi}[Total variation of a discrete function]
    Given $f:\Z \rightarrow \R$, we define the total variation over a finite discrete interval $I=[a..b]$, $\var_I (f)$ as:

    $$\var_I(f) = \sum_{i=a}^{b-1}|f(i+1)-f(i)|.$$

    The total variation of the function $\var (f)$ can be taken as the supremum of the variations over all finite intervals.

\end{defi}

\begin{defi}[Bounded variation]

    The class of functions with bounded variation in $\Z$ is defined as:
    
    $$
    BV(\Z)= \{ f:\Z \rightarrow \R |\  \var f < \infty \}.
    $$

    \label{BVar}
\end{defi}

The boundedness of the centered discrete maximal operator \ref{HLDCMO} in $BV(\Z)$ was recently proven by Temur \cite{Te}. His proof was an adaptation of the continuous case proof, by Kurka, and the boundedness constant is still far from the conjectured best value of $1$. 

For the uncentered version of this operator, Bober, Carneiro, Pierce and Hughes  \cite{BCHP} proved that the total variation of the maximal function can be bounded by the total variation of the original function. This is the result we aim to generalize in this paper. By adapting our new proof for the nontangential maximal operator, from the continuous to the discrete case, we can derive a new theorem for the following discrete analog of the operator.

\begin{defi}[Hardy-Littlewood Discrete Nontangential Maximal Operator]

    Let $f: \Z \rightarrow \R^+_0$, we define the Hardy-Littlewood discrete nontangential maximal operator as:
    $$
    \M^\alpha f(x)= \sup_{j \in \N, y \in \Z, |x-y| \leq \lceil \alpha R \rceil } \frac 1{2R+1} \sum_{i=y-R}^{y+R} f(i).
    $$
    
    \label{H-L}
\end{defi}

\begin{theorem}\label{diangle} Fix any $f \in BV(\Z).$ For every $\alpha \in [\frac{1}{3},+\infty),$ we have that

    \begin{equation}\label{dimaint}
    \var(\M^{\alpha}f) \le \var(f).
    \end{equation}
    
\end{theorem}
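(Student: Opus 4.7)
The plan is to adapt Ramos's continuous-case strategy to the discrete setting, following the template pioneered by Aldaz--P\'erez L\'azaro and refined in the discrete uncentered case by Bober, Carneiro, Hughes and Pierce. The analysis is driven by the detachment set
\[
D := \{\, n \in \Z : \M^\alpha f(n) > f(n) \,\},
\]
which decomposes into maximal discrete intervals (connected components) $I_k = [a_k, b_k]$. The neighbors $a_k - 1$ and $b_k + 1$ lie in the agreement set $D^c$, where $\M^\alpha f = f$. Edges $(n, n+1)$ with both endpoints in $D^c$ contribute $|f(n+1) - f(n)|$ to both sides of the desired inequality, so it suffices to prove, for each component,
\[
\var_{[a_k - 1, b_k + 1]}(\M^\alpha f) \le \var_{[a_k - 1, b_k + 1]}(f).
\]

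The heart of the proof is a structural claim: on each $[a_k - 1, b_k + 1]$, $\M^\alpha f$ is monotone. Once this is granted, the left-hand side above equals $|f(a_k - 1) - f(b_k + 1)|$, which is at most the right-hand side, and summing over $k$ finishes the argument.

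To prove the structural claim I would argue by contradiction, assuming $\M^\alpha f$ has a strict interior local extremum at some $n_0 \in I_k$. Since $n_0 \in D$, the supremum defining $\M^\alpha f(n_0)$ is attained at a pair $(y_0, R_0)$ with $|n_0 - y_0| \le \lceil \alpha R_0 \rceil$, giving
\[
\M^\alpha f(n_0) = \frac{1}{2 R_0 + 1} \sum_{i = y_0 - R_0}^{y_0 + R_0} f(i).
\]
The plan is then to construct admissible balls at $n_0 + 1$ and $n_0 - 1$ by small shifts of $(y_0, R_0)$: keeping $R_0$ fixed and shifting $y_0 \mapsto y_0 \pm 1$ when admissibility forces it, or enlarging $R_0$ slightly to reuse the same center. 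The threshold $\alpha \ge 1/3$ is exactly what guarantees that a one-step shift of the center can be absorbed by a sufficiently mild adjustment of the radius while preserving the nontangentiality condition $|\cdot - \cdot| \le \lceil \alpha R \rceil$. Comparing averages over the shifted and original balls produces inequalities on the boundary values $f(y_0 + R_0 + 1)$ and $f(y_0 - R_0)$ whose simultaneous satisfaction contradicts the optimality of $(y_0, R_0)$ at $n_0$.

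The principal obstacle I expect is the discrete combinatorial bookkeeping around the ceiling function $\lceil \alpha R \rceil$, which can jump at critical values of $R$ and creates boundary effects absent in Ramos's continuous proof. In particular, the regime of small $R_0$, where $\lceil \alpha R_0 \rceil$ is governed by integer rounding rather than by the asymptotic $\alpha R_0$, will likely require a separate case-by-case treatment on small radii. The sharpest case is when the optimal ball is pinned at the admissibility boundary $|n_0 - y_0| = \lceil \alpha R_0 \rceil$ on \emph{both} sides, and it is precisely this case that pins down the $\alpha = 1/3$ threshold, via optimization of the ratio between the unit center shift and the compensating integer increase of the radius.
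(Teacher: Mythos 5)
Your reduction via the detachment set $D=\{n:\M^\alpha f(n)>f(n)\}$ is a reasonable skeleton, and the structural statement it requires is essentially equivalent to the key lemma of the paper's proof (which instead organizes the argument around Kurka-style systems of peaks and Proposition~\ref{divide_int}). But two things in your plan do not go through as described. First, $\M^\alpha f$ need not be monotone on a component $[a_k-1,b_k+1]$: local \emph{minima} of the maximal function inside the detachment set are perfectly possible, so the variation there is not $|f(a_k-1)-f(b_k+1)|$. What is true, and what you actually need, is that there is no \emph{peak}, i.e.\ no $p<r<q$ in $[a_k-1,b_k+1]$ with $\M^\alpha f(p)<\M^\alpha f(r)>\M^\alpha f(q)$; the valley case is then rescued by $\M^\alpha f\ge f$ (the minimum value $v$ is attained at some $n_1$ with $f(n_1)\le v$, so the variation $(f(a_k-1)-v)+(f(b_k+1)-v)$ is dominated by that of $f$).

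Second, and more seriously, your contradiction argument only engages \emph{strict} interior local extrema, whereas the negation of the no-peak property allows the summit to be a plateau of arbitrary length lying entirely inside $D$. For a strict local maximum the conclusion is nearly immediate and needs no boundary-value comparison: if the optimal radius satisfies $R_0\ge1$, the very same ball is admissible at a neighbour of $n_0$, so $R_0=0$ and $n_0\notin D$. The unit-shift-of-the-ball mechanism you describe is the right tool for the uncentered operator, but it is not where the $\tfrac13$ threshold comes from here. The genuine difficulty is the plateau case, and the mechanism that resolves it is sub-averaging rather than shifting: if $f<K:=\M^\alpha f(r)$ on the whole middle third $[\lfloor c-\tfrac R3\rfloor,\lceil c+\tfrac R3\rceil]$ of the optimal ball $[c-R,c+R]$, then one of the two outer sub-intervals of radius about $\tfrac{R-1}2$ has average exceeding $K$, and precisely because $\alpha\ge\tfrac13$ --- via the integer inequality $R\le\lfloor\tfrac{R-1}2\rfloor+\lceil\tfrac R3\rceil+\lceil\tfrac13\lfloor\tfrac{R-1}2\rfloor\rceil+1$, which is the ceiling bookkeeping you anticipated --- that sub-ball is admissible at a point immediately outside the plateau, contradicting local maximality. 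This forces a point $m'$ on the plateau with $f(m')=\M^\alpha f(m')$, hence $m'\notin D$, which is what actually kills the peak (Lemma~\ref{dM=f} in the paper). Without this step your proof has a hole exactly where the content of the theorem lies.
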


From now on, without loss of generality, all proofs will be done assuming $f(x) \geq 0$. We can assume this because if we have a bound for non-negative functions, we have $$ \var Mf = \var M|f| \lesssim \var |f| \leq \var f$$.

In the second section of the paper we will recall some usefull facts about the Hardy-Littlewood maximal operator, describe which values the maximal function can take, and some behaviours of the maximal function in the case where $f$ is of finite variation. We will also present some tools which will help us formulate our proof, as well as some preliminary results.

The third section of the paper will be dedicated to proving the new result. We will build upon the ideas presented in the second section and adapt them for the discrete case. This entails establishing a discrete definition of the Hardy-Littlewood Nontangential Maximal Operator. Careful consideration will be given to selecting a definition that generalizes the uncentered maximal operators, with the aim of deriving the most comprehensive and general result possible.

In the last section of the paper, we will present the general proof of \ref{maint}. It is a more tecnical result which gives a more elementary proof than the original, by Ramos.

\section{The maximal function and Peaks}

In this section, we want to provide some tools to control localy the variation of a maximal function by the variation of the original $f$. These tools are useful in both the continuous and discrete settings. We will also explore some facts about the maximal function.

For the Hardy-Littlewood maximal operator, the particular cases of the centered and uncentered operators, we can characterize the possibilities of values the maximal function can take in a given point. This characterization will help us present our proof.

\begin{comment}
    \begin{proof} 
The continuity comes directly form the definition of the integral and multiplication by a function which is continuous outside $\{0\}$. This means that the supremum is either found in a sublimit when $t \rightarrow \{ 0,\infty \}$ or for a fixed $t$. As the value is always continuous on $y$, and it's domain compact on $y$ for $t$ away from $ \{ 0,\infty \}$, in the case of fixed $t$, it is also obtained for fixed $y$.        
\end{proof}
\end{comment}

\begin{enumerate}
    \item\label{casos_maximal} For the continuous version of the operator: Recall that the average of $f\in L^1$, on an interval centered at $y$ with width $2t$,  $\frac{1}{2t} \int_{y-t}^{y+t} |f(s)| \d s$ is continuous in $y $ and in $t\ne 0$, so the supremum in the definition of the maximal function is obtained either at a fixed $(y,t)$ or at the limit superior when $t\rightarrow \{0, \infty \}$. 
    When the operator is discrete, the supremum is attained either as the averaged of a fixed interval, or as the limit superior when the radius goes to infinity.
    \item\label{inf=min} Note that, any choice of function in $BV$ has limits to plus and minus infinity, call them $A$ and $B$, because the function has bounded variation. This means, the limit superior of the averages when the radius goes to infinity is $\min (Mf)=\frac{A+B}{2}$ for the centered version of the operator. For other versions, the minimum $\min (Mf)$ will be a weighted average between $A$ and $B$, but will not depend on $x$.
    We can then know that at any choosen point $x$, the maximal function is at least $\min (Mf)$. If we get the case of $Mf(y)> Mf(x)$, we can conclude that $Mf(y)$ is obtained either when the radius is finite, or as the limit superior when the radius goes to zero.
\end{enumerate}

We will now prove a small lemma which will be usefull in the continuous case, to find a point where the function is below the maximal function. In the discrete case, this is not necessary, as the definition implies $f(x)\leq \M f(x)$

\begin{lemma}
For all $p\in \R$, and for all open sets $U \ni p$, there exists an $a\in U$ such that $f(a) \leq M^{\alpha}f(p)$
\label{smaller}
\end{lemma}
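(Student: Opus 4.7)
The plan is to argue by contradiction. Suppose the conclusion fails: there exist $p \in \R$ and an open neighborhood $U \ni p$ such that $f(a) > M^{\alpha}f(p)$ for every $a \in U$. Since $U$ is open, choose $\varepsilon > 0$ small enough that the interval $(p-\varepsilon,p+\varepsilon)$ lies entirely inside $U$; then the strict inequality $f > M^{\alpha}f(p)$ holds throughout this interval.

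Integrating this pointwise bound over $(p-\varepsilon,p+\varepsilon)$ and dividing by the length $2\varepsilon$ yields
\[
\frac{1}{2\varepsilon}\int_{p-\varepsilon}^{p+\varepsilon} f(s)\,\d s > M^{\alpha}f(p).
\]
On the other hand, the pair $(y,t) = (p,\varepsilon)$ satisfies $|p-y| = 0 \le \alpha\,\varepsilon$, so it is admissible in the supremum defining $M^{\alpha}f(p)$. Consequently the above average is a candidate in that supremum, and hence it is bounded above by $M^{\alpha}f(p)$, which contradicts the strict inequality displayed. This contradiction proves the lemma.

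The argument is essentially immediate once one recognizes that in the nontangential definition one is always free to take the ball centered at $p$ itself (since $|p-p|=0 \le \alpha t$ for every $t>0$), so averages over symmetric neighborhoods of $p$ always bound $M^{\alpha}f(p)$ from below. The only point of mild care is the pointwise-versus-a.e.\ issue for $BV$ representatives; one can either assume throughout that $f$ denotes a chosen representative (as elsewhere in the paper) or relax the assumption to ``$f(a) > M^{\alpha}f(p)$ for a.e.\ $a \in U$'', which preserves the integral estimate and hence the same contradiction. I do not expect any substantive obstacle beyond this bookkeeping remark.
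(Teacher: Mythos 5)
Your argument is correct and is essentially the paper's own proof in contrapositive form: both rest on the observation that the interval centered at $p$ is admissible in the supremum defining $M^{\alpha}f(p)$, so its average is a lower bound for $M^{\alpha}f(p)$, and a function cannot be everywhere strictly larger than that average. Your closing remark on the a.e.\ versus pointwise issue is a reasonable piece of bookkeeping but does not change the substance.
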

    
\begin{proof}
To prove this claim, we note that if $U$ is a neighbourhood of $p$, then there is an interval $I \subset U$ centered on $p$. Then, $M^{\alpha}f(p) \geq \frac{1}{|I|}\int_I |f(s)| \d s$. If the average of the function on $I$ is smaller or equal than the maximal function, then the interval must have at least one point where the function is also smaller than or equal to $M^{\alpha}f(p)$.
\end{proof}

If, for any type of maximal operator $M$, we can control the variation of $Mf$ within any interval $I$ by the variation of $f$, it follows that $\var Mf \leq \var f$. In \cite{Ku}, Kurka presents us with the notion of peaks, which can be used to partition $\R$ (or $\Z$) into intervals where this control becomes easier to analyze.

\DeclareEmphSequence{\bfseries\itshape}

\begin{defi} \label{peak}
    Let $f \in L^1_{loc}(\R)$ or be a discrete one-dimensional function. We define:
    \begin{itemize}
    \item A \emph{peak} $\p$ for $f$ is a set consisting of three points \peak such that $ f(p) < f(r) > f(q) $,
    \item The \emph{variation} of the function on a peak $ \p = \{ p < r < q \} $ is given by
    $$ \var_\p(f) = 2f(r) - f(p) - f(q), $$
    \item A \emph{system of peaks} for $f$ is a set, $\Pe = \{ p_1 < r_1 < q_1 = p_2 < \cdots < r_n < q_n \}$ such that, for all $1 \leq i \leq n$, $p_i < r_i < q_i$ is a peak.
    \item The \emph{variation} of the function over a system $ \mathbbm{P} $ of peaks for $f$ is
    $$ \var_\mathbbm{P}(f) = \sum _{\p \in \mathbbm{P}} \var_\p(f), $$
    \end{itemize}
\end{defi}

Systems of peaks are a valuable tool in controlling the total variation of a maximal function. Given a finite subset of the function's domain, the variation of the maximal function on this set can be bounded by the variation within a system of peaks, plus the variation outside the system.

To clarify this idea, recall that for any function $f$, the value of $\var(f)$ is the supremum of the variation of $f$ over any finite subset $S = \{ x_1<x_2<...<x_N \}$ of the domain. The variation within the subset can be decomposed into the variation over a system of peaks, plus the decreasing variation at the start, and the increasing variation at the end.

\begin{proposition}\label{divide_int}
    Let $f \in L^1_{loc}(\R)$ or be a discrete one-dimensional function and let $S=\{ x_1<x_2<...<x_N \}$ be a finite subset of the respective domain such that the maximal function is non-monotonous in $S$.
    Let $k$ be the smallest index such that $Mf(x_k) < Mf(x_{k+1}) $, and $j$ the largest with $Mf(x_j) < Mf(x_{j-1}) $. 
    If $k<j$
    Then, we can find a system of peaks $\mathbbm{P}\subset \{ x_k<...<x_j \}$ such that:
    $$
    \sum_{i=1}^{N-1} |Mf(x_{i+1})-Mf(x_i)| \leq Mf(x_1)-Mf(x_k)+ \var_\mathbbm{P}(Mf) + Mf(x_N)-Mf(x_j).
    $$
    In particular, if $Mf(x_1)<Mf(x_2)$ and $Mf(x_{N-1})>Mf(x_N)$,
    $$
    \sum_{i=1}^{N-1} |Mf(x_{i+1})-Mf(x_i)| \leq \var_\mathbbm{P}(Mf).
    $$
    If $k\geq j$:
    $$
    \sum_{i=1}^{N-1} |Mf(x_{i+1})-Mf(x_i)| \leq Mf(x_1)-Mf(x_j) + Mf(x_N)-Mf(x_k).
    $$
    
\end{proposition}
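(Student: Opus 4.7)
The plan is to split the full variation sum over $S$ into three consecutive blocks—indices $i\in[1,k-1]$, $i\in[k,j-1]$, and $i\in[j,N-1]$—and analyze each using the extremal definitions of $k$ and $j$. The two boundary blocks are immediate: by minimality of $k$, no step $i\to i+1$ with $i<k$ satisfies $Mf(x_i)<Mf(x_{i+1})$, so $Mf$ is non-increasing on $\{x_1,\ldots,x_k\}$ and the left block telescopes to $Mf(x_1)-Mf(x_k)$; symmetrically, by maximality of $j$, $Mf$ is non-decreasing on $\{x_j,\ldots,x_N\}$ and the right block telescopes to $Mf(x_N)-Mf(x_j)$.

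For the case $k\geq j$, any intermediate step $j\leq i<k$ is simultaneously non-increasing (since $i<k$) and non-decreasing (since $i\geq j$), forcing $Mf$ to be constant on $\{x_j,\ldots,x_k\}$. The middle block therefore vanishes and adding the two boundary blocks gives exactly $Mf(x_1)-Mf(x_j)+Mf(x_N)-Mf(x_k)$, the claimed bound (in fact with equality).

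For the case $k<j$, the task reduces to constructing a system of peaks $\mathbbm{P}\subset\{x_k,\ldots,x_j\}$ satisfying
\[
\sum_{i=k}^{j-1} |Mf(x_{i+1})-Mf(x_i)| = \var_\mathbbm{P}(Mf).
\]
I would build $\mathbbm{P}$ greedily: set $p_1=x_k$, and having produced $p_s=x_m$ at a point where the next step is strictly up, let $r_s=x_{m'}$ where $m'$ is the last index of the maximal non-decreasing run starting at $m$ that is immediately followed by a strict decrease, and let $q_s=x_{m''}$ where $m''$ is the last index of the maximal non-increasing run starting at $m'$ that is immediately followed by a strict increase, terminating when $q_n=x_j$. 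Since $r_s$ sits at the far end of its plateau (hence strictly above $p_s$ on the left and above $q_s$ on the right), each triple $\{p_s<r_s<q_s\}$ is a genuine peak; the identifications $q_s=p_{s+1}$ built into the construction make $\mathbbm{P}$ a valid system of peaks contained in $\{x_k,\ldots,x_j\}$. Telescoping within each monotone run yields the displayed equality, and combining with the boundary telescoping completes the proof.

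The main technical point is the correct handling of plateaus when selecting $r_s$ and $q_s$; choosing each candidate at the far end of its plateau ensures the required strict peak inequalities, while the strict increase at step $k\to k+1$ and the strict decrease at step $(j-1)\to j$ built into the definitions of $k$ and $j$ guarantee that the greedy process both starts on an up-run at $x_k$ and terminates on a down-run reaching $x_j$, so that $\mathbbm{P}$ indeed has the claimed structure.
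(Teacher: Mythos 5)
Your proof is correct and follows essentially the same route as the paper: split the sum into the two monotone boundary blocks plus a middle block, telescope the boundary blocks using the extremality of $k$ and $j$, and greedily convert the middle block's alternating monotone runs into a system of peaks whose variation equals the middle sum. Your explicit handling of plateaus and of the constant stretch when $k\geq j$ matches the paper's choices (the paper's ``smallest index followed by a strict decrease'' is exactly your ``far end of the plateau''), so there is nothing to add.
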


\begin{proof}
    
    We can divide the sum $\sum_{i=1}^{N-1} |Mf(x_{i+1})-Mf(x_i)|$ into three parts, where the first and last of them might be empty if $k=1$ or $j=N$, and the middle one is empty if $k\geq j$. The parts are as follows: the first part is $\sum_{i=1}^{k-1} |Mf(x_{i+1})-Mf(x_i)|$, the second part is $\sum_{i=k}^{j-1} |Mf(x_{i+1})-Mf(x_i)|$, and the third part is $\sum_{i=j}^{N-1} |Mf(x_{i+1})-Mf(x_i)|$.
    For the first and the last parts, we can use the monotony of the maximal function. Since the maximal function is monotonic in the chosen points, we have that $\sum_{i=1}^{k-1} |Mf(x_{i+1})-Mf(x_i)|= Mf(x_1)-Mf(x_k)$ and similarly, $\sum_{i=j}^{N-1} |Mf(x_{i+1})-Mf(x_i)|=Mf(x_N)-Mf(x_j)$.
    For the middle sum, if it exists, we can construct a system of peaks with the same variation as follows: Start with $p_1=x_k$. Then, choose $m>1$ as the smallest index such that $Mf(x_{m+1})<Mf(x_m)$, and $n>m$ as the smallest index such that $Mf(x_{n+1})>Mf(x_n)$. Define $r_1=x_m$ and $q_1=x_n$. If $q_1< x_j$, we continue the process by setting $p_2=q_1$ and repeating the first step, now for the second peak. The process repeats until some $q_i = x_j$, at which point it stops.
    When $k\geq j$, $Mf(x_j) = Mf(x_k)$ and we can bound by the first and last parts.

\end{proof}

The goal now is to prove that if we can control the variation of the maximal function using its variation within a system of peaks, along with its variation outside this system, then we need to control the variation of the maximal function by the variation of the function itself in both cases. Controlling the variation within a system of peaks will depend on the chosen operator. However, our aim is to argue that for all maximal functions we have defined, both in the continuous and discrete settings, given a fixed $S=\{ x_1<x_2<...<x_N \}$, we can control the variation outside a system of peaks (chosen as in the previous proposition), by the variation of the function outside the interval containing the system of peaks.

\begin{proposition}\label{extremos}
    Let $f \in L^1_{loc}(\R)$ or be  a discrete one-dimensional function, and let $x<y$ be two real numbers. Assume $Mf(x)\leq Mf(y)$. Then, $\forall \epsilon > 0, \ \forall \delta >0 $, we can bound $Mf(y)-Mf(x)$ by $|f(w)-f(z)|+\delta$, with $w>z$, for well chosen $w\in (x,\infty)$, and $z \in (x-\epsilon,\infty)$. For the discrete case, make the choice in the intersection of each of these intervals with the integers.
    The analogous result holds for $Mf(x)\geq Mf(y)$, and for $y<x$.
\end{proposition}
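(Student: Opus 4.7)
My plan is to pick $w > x$ where $f(w)$ is close to $Mf(y)$, together with $z$ near $x$ where $f(z) \leq Mf(x)$ (via Lemma~\ref{smaller}), so that $f(w) - f(z) \geq Mf(y) - Mf(x) - \delta$. The substance of the proof is locating such a $w$ when the averaging interval that realizes $Mf(y)$ extends past $x$ on the left.

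Fix a parameter $\eta > 0$ to be chosen below. By item~(\ref{casos_maximal}) I may fix $(a, t)$ with $|y - a| \leq \alpha t$ and $\frac{1}{2t}\int_{a-t}^{a+t} f \geq Mf(y) - \eta$; the boundary cases $t \to 0, \infty$ reduce to this (or, via item~(\ref{inf=min}), force $Mf(y) = Mf(x)$ and make the proposition trivial). Let $I = [a-t, a+t]$. Three geometric configurations appear: \textbf{(A)} $a - t \geq x$, so $I \subset [x, \infty)$ and $\sup_I f \geq Mf(y) - \eta$ immediately produces a suitable $w \in I$; \textbf{(B)} $x \in I$ with $|x - a| \leq \alpha t$, in which case $I$ is also valid for $Mf(x)$, giving $Mf(y) - Mf(x) \leq \eta$ and the bound becomes trivial; \textbf{(C)} $x \in [a - t, a - \alpha t)$, the delicate case.

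In case (C), set $L = x - (a-t)$ and $R = (a+t) - x$, so that $L + R = 2t$ and $R - L = 2(a - x) > 2\alpha t > 0$. The key move is to use the symmetric sub-interval $J = [a - t, x + L] \subset I$: it has $x$ as its center and half-length $L$, hence is a valid averaging interval for $Mf(x)$, giving $\int_J f \leq 2L\,Mf(x)$. Splitting $\int_I f = \int_J f + \int_{x+L}^{a+t} f$ and bounding the latter by $(R - L)\,\sup_{[x+L,\, a+t]} f$, I obtain
\begin{equation*}
\sup_{[x+L,\, a+t]} f \;\geq\; Mf(y) \;+\; \frac{2L\bigl(Mf(y) - Mf(x)\bigr) - 2t\eta}{R - L}.
\end{equation*}
Using $Mf(x) \leq Mf(y)$ and choosing $\eta < \alpha\delta/2$, which forces $2t\eta < (R - L)\delta/2$, the supremum is at least $Mf(y) - \delta/2$; I pick $w$ in $[x+L, a+t] \subset (x, \infty)$ realising it to within a further $\delta/4$.

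Finally, in every case I apply Lemma~\ref{smaller} on a small open neighborhood of $x$ contained in $(x - \epsilon, w)$ to obtain $z$ with $f(z) \leq Mf(x)$, $z > x - \epsilon$, and $z < w$; combining gives $|f(w) - f(z)| + \delta \geq Mf(y) - Mf(x)$. For the discrete operator, Lemma~\ref{smaller} is replaced by the tautology $f(x) \leq \M f(x)$ (take $z = x$), integrals become finite sums, and the same symmetric-sum argument works with no essential change. The analogous statements ($Mf(x) \geq Mf(y)$ or $y < x$) follow by reflecting the real line or swapping the roles of $x$ and $y$. The only genuine difficulty is case (C); the symmetric-interval trick is precisely where the positivity of $\alpha$ enters the proof, via the estimate $R - L > 2\alpha t$.
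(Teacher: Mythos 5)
Your proof is correct and follows essentially the same route as the paper's: both hinge on comparing the average over the realizing interval $I$ for $Mf(y)$ with the average over the symmetric sub-interval $J\subset I$ centered at $x$ (admissible for $Mf(x)$), and both obtain $z$ from Lemma~\ref{smaller}. The only difference is presentational: the paper runs the key step as a contradiction (assuming $f<Mf(y)-\delta$ on $(x,\infty)$ and deducing $\mathrm{avg}_J f>Mf(y)\geq Mf(x)$), whereas you take the contrapositive and compute the explicit quantitative lower bound on $\sup_{I\setminus J}f$, which also makes the three geometric cases and the role of $\alpha>0$ more transparent.
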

\begin{proof}
    If $Mf(x) = Mf(y)$, the proof is complete by choosing, for example, $w=y$, and $z=x$.
 
    To show that we can select a $w$ such that $f(w)\geq Mf(y)-\delta$, proceed by contradiction. Assume no such $w$ exists, meaning that $\forall a>x$ we have that $f(a)<Mf(y)-\delta$. Since $Mf(y)> Mf(x)$, the value of the maximal function at $y$ is either the average over a finite interval, call it $I$, or the limit of averages of intervals whose lengths tend to zero.

    For the case where we have a fixed interval $I$, under our assumption, $I$ must be large enough to contain at least one point outside the interval $(x,\infty)$. This is true because the average of values strictly below $Mf(y)$ cannot be $Mf(y)$. Hence, $x\in I$. 

    Next, we can select an interval $J$ around $x$, starting at the beginning of $I$, and ending before $I$, in such a way that it is admissible for computing the supremum for the maximal function. The average over this interval $J$ will then be larger than $Mf(y)$, because the average on $I$ was $Mf(y)$, and the values of the function in $I\setminus J$ are smaller. This leads to a contradiction, as it implies $Mf(y)> Mf(x)$ would not hold.

    When the value of the maximal function value comes from the limit of the averages over intervals with length going to zero, there will be an interval where the average is greater than $ Mf(y) -\delta$. This means that at least one point in this interval has $f$ taking a value above $ Mf(y) -\delta$. Choose that point for $w$.

    To select $z$, we use Lemma \ref{smaller} on the interval $(x-\epsilon , w)$, and pick $z$ as the $a$ given by the Lemma. Since $f(w)\geq Mf(y)-\delta$ and $f(z) \leq Mf(x)$, we are done.

\end{proof}
\begin{remark}
    Note that in the discrete setting, the $\epsilon$ and the $\delta$ can be ignored, as we can simply choose $z=x$. The case where the radius is zero reduces to $Mf(y)=f(y)$, so we can select $w=y$.
\end{remark}

\section{Discrete case}

In this section, we will prove a discrete version of the result in \cite{JPR}.

When defining the discrete version of the Nontangential Maximal Operator, we must specify the allowable deviation for the intervals from which we take the supremum, relative to the point at which we are computing the Maximal Function. We will use Definition \ref{H-L}

Before proceeding to the discrete version of the theorem, we should make some remarks about this definition.

\begin{remark}
    To achieve a result similar to the continuous case, we need to use the ceiling function in the definition, or else Lemma \ref{dM=f} would be false for $\alpha \in [ \frac 1 3 , \frac 1 2 )$. To construct a counterexample, consider the characteristic function of the set $\{ 0,4 \}$, which has a local maximum at $2$.
\end{remark}

\begin{remark}
    In the definition utilizing the ceiling function, the discrete version of Theorem \ref{diangle}, established down to the endpoint $\alpha = \frac 1 3$, also implies the same theorem with a definition excluding the ceiling, but only down to $\alpha = \frac 1 2$. This makes Lemma \ref{dM=f} stronger than any equivalent lemma defined without ceilings, showing that this strategy achieves the best possible result within these two definitions. 
\end{remark}

We are now ready to start the proof of Theorem \ref{diangle}.

\begin{proof}[Proof of Theorem \ref{diangle}]

This proof will be done for the endpoint case $\alpha = \frac 13$. For any bigger value of $\alpha$, we would be able to make the same choices as in this case, so the theorem will still hold. 

Fix any two integers $a$ and $b$, and the integer interval $I=[a..b]$. We aim to control the variation of the maximal function inside $I$, by the variation of $f$. If successful, this will yield the desired result. Consider the full set $I$ as a partition.

Inside $I$, either the maximal function is monotonous, or we can use Proposition \ref{divide_int} to divide our variation in up to three parts, the monotonous parts at the start and end, plus a system of peaks $\Pe = \{ p_1 < r_1 < q_1 = p_2 < \cdots < r_n < q_n \}$ in between. 

In the case where the maximal function is monotonous in the interval, we only need to control the variation $|\M^{\alpha}f(a)-\M^{\alpha}f(b)| $. This can be contolled by the variation of $f$ using Proposition \ref{extremos}.

When the maximal function is not monotonous, we can contol the variation of the (potentially empty) first and last parts using the Proposition \ref{extremos}. It gives us a control of those parts by the variation of the function outside $\Pe$. The only thing missing is then to contol the variation of the maximal function over the system of peaks $\Pe$ by the variation of the function over the same interval.

\begin{lemma}\label{dM>f}
Let $f \in BV(\Z).$ Then $\M^{\alpha}f(x) \ge f(x)$.
\end{lemma}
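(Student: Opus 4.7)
The plan is to exhibit a single admissible choice of parameters $(R,y)$ in the supremum defining $\M^{\alpha}f(x)$ whose associated average already equals $f(x)$. Since the supremum is taken over all such admissible pairs, this immediately yields $\M^{\alpha}f(x) \ge f(x)$.

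More concretely, I would take $R = 0$ and $y = x$. The admissibility condition $|x-y| \le \lceil \alpha R \rceil$ becomes $0 \le \lceil \alpha \cdot 0 \rceil = 0$, which holds for every $\alpha > 0$, so this pair is allowed in the supremum. The corresponding average is
\[
\frac{1}{2\cdot 0 + 1}\sum_{i = x - 0}^{x + 0} f(i) = f(x),
\]
and hence
\[
\M^{\alpha}f(x) \;=\; \sup_{R,\,y} \frac{1}{2R+1}\sum_{i=y-R}^{y+R} f(i) \;\ge\; f(x).
\]

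Since the argument is a single line once the right pair is chosen, there is essentially no obstacle here; the only thing worth emphasizing is the contrast with the continuous case, where the analogous statement required the auxiliary Lemma~\ref{smaller}. In the discrete setting the radius-zero window $\{x\}$ is itself a legal averaging set, so the bound $f(x) \le \M^{\alpha}f(x)$ is built directly into the definition, as was already noted just before Lemma~\ref{smaller} in the previous section.
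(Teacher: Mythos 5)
Your proof is correct and is essentially the paper's own argument: the paper likewise observes that $f(x)$ is the average corresponding to radius zero centered at $x$, so the supremum dominates it. Your version merely spells out the admissibility check $|x-y|\le\lceil\alpha\cdot 0\rceil=0$, which is a fine (if optional) addition.
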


\begin{proof}
   
    This statement follows directly from the definition of $\M^{\alpha} f(x)$, as $f(x)$ corresponds to the average when $j=0$.
    
\end{proof}

\begin{lemma}\label{dM=f}
Let $f \in BV(\Z).$ Then, for each local maximum in our system of peaks at $m=r_i$, $\M^{\alpha}f(m) $, there exists another local maximizer $m'$ of $\M^{\alpha}f$ such that $\M^{\alpha}f(m) = \M^{\alpha}f(m') = f(m')$, and $\M^{\alpha}f(m)= \M^{\alpha}f(i)$ for all $i$ between $m$ and $m'$.
\end{lemma}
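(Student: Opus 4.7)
My plan is to reduce to a specific finite averaging interval, and then run a pigeonhole-and-contradiction argument inside it, mirroring the shape of the analogous argument for the continuous case. Since $\M^\alpha f(m) > \M^\alpha f(p_i) \geq \min_x \M^\alpha f(x)$, the observations at the start of Section~2 forbid $\M^\alpha f(m)$ from being realized in the radius-to-infinity limit; so there exist $R \geq 0$ and $y \in \Z$ with $|m - y| \leq \lceil R/3 \rceil$ such that $\M^\alpha f(m) = (2R+1)^{-1}\sum_{i=y-R}^{y+R} f(i)$. If $R = 0$ then $y = m$ and $\M^\alpha f(m) = f(m)$, so $m' = m$ works.

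For $R \geq 1$, set $K = \lceil R/3 \rceil$ and let $I_M = [y-K, y+K]$. For every $x \in I_M$ the interval $[y-R, y+R]$ is admissible in the supremum defining $\M^\alpha f(x)$ since $|x - y| \leq K = \lceil \alpha R \rceil$; hence $\M^\alpha f \geq \M^\alpha f(m)$ throughout $I_M$. I would take $m'$ to be a maximizer of $f$ on $I_M$ and prove the key claim $f(m') \geq \M^\alpha f(m)$ by contradiction: assuming $f < \M^\alpha f(m)$ throughout $I_M$, the identity $\sum_{i=y-R}^{y+R} f(i) = (2R+1)\M^\alpha f(m)$ forces one outer block, say $[y-R, y-K-1]$, to have average strictly greater than $\M^\alpha f(m)$. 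From this heavy block I would extract an odd-length admissible sub-interval centered at some integer $x^*$ yielding $\M^\alpha f(x^*) > \M^\alpha f(m)$; this contradicts the local-maximum status of $m$ in the system of peaks, since such an $x^*$ would force a re-selection of the peak at $r_i$ incompatible with the chosen configuration.

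Once $f(m') \geq \M^\alpha f(m)$ is known, the chain $\M^\alpha f(m) \leq f(m') \leq \M^\alpha f(m') \leq \M^\alpha f(m)$ collapses to equality, the last inequality following by re-applying the same pigeonhole argument at $m'$ (which also forces $\M^\alpha f \equiv \M^\alpha f(m)$ on all integers between $m$ and $m'$, each of which lies in $I_M$); the local maximizer property of $m'$ then comes for free, since a strict upward neighbor would reactivate the contradiction. The main obstacle I anticipate is the parity of $R - K$: the outer block has $R - K$ terms, which may be even, while admissible averaging intervals in the discrete setting must have odd length. I would handle this with a short case analysis on $R \bmod 3$, shortening $[y-R, y-K-1]$ by one boundary term when $R - K$ is even and verifying that the strict inequality on the block's average survives the truncation, exploiting the fact that the discarded term is at most $\M^\alpha f(m)$ on average once the left-block inequality is already strict.
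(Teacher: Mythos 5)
Your overall architecture matches the paper's: reduce to a finite averaging interval $[y-R,y+R]$, observe that $\M^{\alpha}f\geq \M^{\alpha}f(m)$ on the plateau $I_M=[y-\lceil R/3\rceil,\,y+\lceil R/3\rceil]$ and hence $=\M^{\alpha}f(m)$ there, and derive a contradiction from the assumption $f<\M^{\alpha}f(m)$ on $I_M$ via a pigeonhole on the outer blocks. The gap is in how you close the contradiction. Two things must hold simultaneously: (i) the large average must be admissible for some point $x$, and (ii) that point $x$ must be provably located in $[p_i,r_i)$ (or at least strictly inside the peak), since only there does the peak structure force $\M^{\alpha}f(x)\leq \M^{\alpha}f(m)$. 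All you know about $p_i$ is that $p_i\leq y-\lceil R/3\rceil-1$; it may sit exactly at $y-\lceil R/3\rceil-1$. Your heavy block $[y-R,\,y-\lceil R/3\rceil-1]$ fails on both counts. If you take $x^*$ to be a point of the block where $f>\M^{\alpha}f(m)$ (or the center of a heavy sub-block), $x^*$ can lie anywhere down to $y-R$, possibly to the left of $p_i$, where a large value of $\M^{\alpha}f$ contradicts nothing — to the left of $p_i$ the maximal function is perfectly free to exceed $\M^{\alpha}f(r_i)$ (think of a taller neighbouring peak). "Forcing a re-selection of the peak" is not an argument: the system of peaks is a fixed selection from the partition and is not destabilized by large values outside $[p_i..q_i]$. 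If instead you try to use the whole heavy block as the averaging interval for the boundary point $y-\lceil R/3\rceil-1$, admissibility fails: the block has radius $\rho=(R-\lceil R/3\rceil-1)/2$ and its center sits at distance $\rho$ from that boundary point, so you would need $\rho\leq\lceil\rho/3\rceil$, which is false for $\rho\geq 2$. Your worry about the parity of $R-\lceil R/3\rceil$ is a side issue compared to this.

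The paper resolves exactly this tension by taking a much longer interval: the one centered at $c-R+\lfloor\frac{R-1}{2}\rfloor$ with radius $\lfloor\frac{R-1}{2}\rfloor$, which reaches from $y-R$ almost up to $c$, i.e.\ well into the plateau. Its complement inside $[c-R..c+R]$ (together with the mirror interval) is a small gap around $c$ contained in the plateau, so one of the two long intervals still has average $>\M^{\alpha}f(m)$; and because its radius is about $R/2$ while its center is only about $R/6$ away from the point $y-\lceil R/3\rceil-1$, that point \emph{is} admissible — this is the inequality $R\leq\lfloor\frac{R-1}{2}\rfloor+\lceil\frac{R}{3}\rceil+\lceil\frac13\lfloor\frac{R-1}{2}\rfloor\rceil+1$, verified by induction modulo $6$, and it is precisely where the aperture $\alpha=\frac13$ is used sharply. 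Since $y-\lceil R/3\rceil-1$ lies in $[p_i,r_i)$ (it is the first point left of the plateau, and $p_i$ is at or left of it), monotonicity of $\M^{\alpha}f$ on $[p_i..r_i]$ gives the contradiction. Without replacing your heavy block by such an enlarged interval and verifying this admissibility estimate, the proof does not close.
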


\begin{proof}

For the reason refered to in \ref{inf=min}, we know that the smallest radius used to compute the maximal function on a maximizer is finite. Thus, for a certain radius $R$ and center $c$ close enough to $m$, down to $|c-m|\leq \lceil \frac 13 R \rceil$.
$$
K = \M^{\alpha}f(m) = \frac 1{2R+1} \sum_{i=c-R}^{c+R} f(i).
$$

Knowing that $K$ is the maximum of the maximal function inside the interval containing the peak, we take that:
$$
\forall y\in [p_i..q_i], \M^{\alpha} f(y)\leq K.
$$

In the case where $R=0$, we are done as $c=m$ and $\M^{\alpha} f(m)= f(m)$. Assume, looking for a contradiction, that $R>0$ and no such $m'$ exists.

First, note that $\forall y \in [\lfloor c-\frac R3 \rfloor , \lceil c+\frac R3 \rceil ]$, we have that $\M^{\alpha}f(y) \geq K $, because the same average as the one for $m$ can be applied for any $y$ within this range. Since neither $\M^{\alpha} f(p_i)\geq K$ nor $\M^{\alpha} f(q_i)\geq K$, it follows that all such $y$ must be contained in $ [p_i..q_i] $. We can now conclude that $\M^{\alpha}f(y) = K$ throughout the interval. 

If, for any of those $y$, $f(y) = K$, we are done, as we can simply set $m' = y$. 

Otherwise, consider the points $c-R+\lfloor \frac {R-1}2 \rfloor$ and $c+R-\lfloor \frac {R-1}2 \rfloor$, and let us examine the intervals centered at those points with radius $\lfloor \frac {R-1}2 \rfloor$. Observe that these intervals do not overlap, and leave a gap between them which contains the point $c$. One of the averages over the two intervals must be larger than $K$. This is true because the average over the full interval $[c-R.. c+R]$ is $K$, and, by hypothesis, the function values at points outside both intervals is smaller than $K$. Without loss of generality, assume it the left interval satisfies this condition. 

Our goal now is to show that the average of the function over the left interval provides a lower bound for $\M^{\alpha}f(\lfloor c-\frac R3 \rfloor-1)$. To achieve this, we need to show that the argument is less than or equal to $c-R+ \lfloor \frac {R-1}2 \rfloor + \lceil \frac 13 \lfloor  \frac {R-1}2 \rfloor\rceil$.

$$
\lfloor c-\frac R3 \rfloor-1 \leq c-R+ \lfloor \frac {R-1}2 \rfloor + \lceil \frac 13 \lfloor  \frac {R-1}2 \rfloor\rceil \Leftrightarrow \\
R \leq \lfloor \frac {R-1}2 \rfloor + \lceil \frac R3 \rceil + \lceil \frac 13 \lfloor  \frac {R-1}2 \rfloor\rceil + 1
$$

We can prove this by induction. If it holds for $R=n$, it also holds for $R= n+6$, since multiples of $6$ jump out of the floors and ceilings as integers. Verifying this for the first six integers is straightforward, so the proof is complete.

Now, we know that $\M^{\alpha}f(\lfloor c-\frac r3 \rfloor-1) > K$, which leads to a contradiction with $K$ being a maximum of the maximal function. Between $p_i$ and $r_i$, the maximal function cannot exceed $K$, and between $p_i$ and $q_i$, $r_i$ is a maximizer of the maximal function within the interval. This contradiction arises from our assumption that $R>0$ and $m'$ not existing. We conclude that $R=0$ or $\M^{\alpha}f(m) = f(m')$.

\end{proof}

We can then control the variation of the maximal function inside each peak $\p=\{ p < r < q \} \subset \Pe$ by the variation of the function within the interval containing the peak. By Lemma \ref{dM>f}, we know that $f(p)\leq \M^{\alpha}f(p)$, and $f(q)\leq \M^{\alpha}f(q)$. Additionally, Lemma \ref{dM=f} guarantees the existence of a point $m'\in (p,q)$ such that $f(m')\geq \M^{\alpha}f(r)$. 
To confirm that $m'$ lies within the interval $(p,q)$, observe that between $m'$ and $r$, the maximal function takes the same value as in $r$, meaning neither $p$ nor $q$ lie between $m'$ and $r$. Since $r\in (p,q)$, it follows that $m'\in (p,q)$. 

Finally note that:
$$
\var_\Pe(\M^{\alpha}f) = 2\M^{\alpha}f(r) - \M^{\alpha}f(p) - \M^{\alpha}f(q) \leq 2f(m')-f(p)-f(q) \leq \var_{[p..q]}(f).
$$

\end{proof}

\section{General case}

In this section, we will provide an alternative proof of the theorem from \cite{JPR} in the continuous setting.

\begin{theorem}\label{angle} Fix $f \in BV(\R)$. For every $\alpha \in [\frac{1}{3},+\infty),$ we have that

\begin{equation}\label{mainteo}
\var(M^{\alpha}f) \le \var(f).
\end{equation}

\end{theorem}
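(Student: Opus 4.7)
The plan is to mirror the proof of Theorem \ref{diangle} in the continuous setting, with Lemma \ref{smaller} playing the role of the pointwise inequality $M^\alpha f\ge f$ and a continuous analog of Lemma \ref{dM=f} replacing that discrete lemma. Since $\var(M^\alpha f)=\sup_S\sum_{i=1}^{N-1}|M^\alpha f(x_{i+1})-M^\alpha f(x_i)|$ ranging over finite subsets $S=\{x_1<\cdots<x_N\}$ of $\R$, it suffices to bound each such sum by $\var(f)$ up to an arbitrarily small error and then pass to the supremum. If $M^\alpha f$ is monotone on $S$ the sum collapses to $|M^\alpha f(x_N)-M^\alpha f(x_1)|$, which Proposition \ref{extremos} controls by $\var(f)$. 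Otherwise, Proposition \ref{divide_int} decomposes the sum into a monotonically decreasing prefix, a middle sum $\var_\Pe(M^\alpha f)$ over a system of peaks $\Pe=\{p_1<r_1<q_1=p_2<\cdots<q_n\}$, and a monotonically increasing suffix; Proposition \ref{extremos} again bounds the prefix and the suffix by the variation of $f$ outside $[p_1,q_n]$.

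The core task is the middle sum. For each peak $\{p<r<q\}\in\Pe$ and each $\varepsilon>0$ I want to show
\begin{equation*}
2M^\alpha f(r)-M^\alpha f(p)-M^\alpha f(q)\;\le\;\var_{[p,q]}(f)+3\varepsilon,
\end{equation*}
and then sum over peaks using the disjointness of the interiors of the intervals $[p_i,q_i]$. Lemma \ref{smaller} provides $p',q'$ in arbitrarily small neighborhoods of $p,q$ with $f(p')\le M^\alpha f(p)$ and $f(q')\le M^\alpha f(q)$. The missing ingredient is a point $r'\in(p,q)$ satisfying $f(r')\ge M^\alpha f(r)-\varepsilon$, which is exactly the continuous analog of Lemma \ref{dM=f}.

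Producing this $r'$ is the main obstacle. By the dichotomy in observation \ref{casos_maximal}, either $K:=M^\alpha f(r)$ is a limit of averages over balls with radius shrinking to zero, in which case $r'$ may be chosen in any tiny neighborhood of $r$ and hence inside $(p,q)$; or $K$ is attained on a fixed ball $(y,t)$ with $|r-y|\le\alpha t$. In the latter case I first upgrade $r$ to a near-maximizer $r^{**}$ of $M^\alpha f$ over $[p,q]$, which only enlarges the quantity being bounded, and then analyze the achieving ball $(y^{**},t^{**})$ of $K^{**}:=M^\alpha f(r^{**})$. The inner region $[y^{**}-\alpha t^{**},y^{**}+\alpha t^{**}]$ must sit inside $[p,q]$, since otherwise $p$ or $q$ would belong to it and inherit $M^\alpha f\ge K^{**}$, contradicting the strict peak inequality. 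Supposing for contradiction that $f<K^{**}-\varepsilon$ throughout this inner region, an averaging identity forces one of the outer halves of $[y^{**}-t^{**},y^{**}+t^{**}]$ to have mean exceeding $K^{**}$ by a definite amount; placing an admissible ball centered at $y^{**}-(1+\alpha)t^{**}/2$ with radius $(1-\alpha)t^{**}/2$ then produces values of $M^\alpha f$ strictly above $K^{**}$ on an interval which, under the constraint $\alpha\ge 1/3$, meets $[p,q]$ and contradicts the choice of $r^{**}$. The threshold $\alpha=1/3$ is exactly where the geometry closes: the contradiction interval lies in $[y^{**}-7t^{**}/9,y^{**}-5t^{**}/9]$, and a short case analysis (either $p\le y^{**}-5t^{**}/9$ so this interval already meets $[p,q]$, or $p$ sits so close to the inner boundary $y^{**}-t^{**}/3$ that a shifted admissible ball forces $M^\alpha f(p)$ within order $\varepsilon$ of $K^{**}$ and the peak contribution is absorbed into the error) finishes the argument.

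Summing the peak bound over $\Pe$ and combining with the prefix/suffix contributions yields $\sum_{i=1}^{N-1}|M^\alpha f(x_{i+1})-M^\alpha f(x_i)|\le\var(f)+C\varepsilon$ for a constant $C$ independent of $\varepsilon$. Letting $\varepsilon\to 0$ and then taking the supremum over $S$ completes the proof. The range $\alpha>1/3$ is strictly easier, as the inner region is larger and the contradiction point lies deeper inside, so the same argument applies verbatim.
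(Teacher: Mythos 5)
Your overall architecture matches the paper's: decompose a finite partition via Proposition \ref{divide_int}, control the monotone prefix and suffix with Proposition \ref{extremos}, and reduce the middle part to finding, for each peak $\{p<r<q\}$ of $M^\alpha f$, a point $r'\in(p,q)$ with $f(r')\geq M^\alpha f(r)-\varepsilon$ together with points from Lemma \ref{smaller} below $M^\alpha f(p)$ and $M^\alpha f(q)$. The gap is in how you produce $r'$ in the fixed-ball case. Writing $K=M^\alpha f(r^{**})$ for the average over $[y-t,y+t]$, your contradiction only lands when the interval $[y-\tfrac{7t}{9},y-\tfrac{5t}{9}]$ (where the ball $[y-t,y-\tfrac{t}{3}]$ certifies $M^\alpha f>K$) actually meets $[p,q]$. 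In the residual sub-case $p\in(y-\tfrac{5t}{9},y-\tfrac{t}{3})$ your claim that ``a shifted admissible ball forces $M^\alpha f(p)$ within order $\varepsilon$ of $K$'' is not substantiated and does not follow from the single piece of information you are using (that the average over $[y-t,y-\tfrac{t}{3}]$ exceeds $K$). If the excess mass sits near the far end $y-t$, the minimal ball containing it that is admissible for such a $p$ has length close to $t$ while capturing only about $\tfrac{2t}{3}K$ of mass, giving an average near $\tfrac{2}{3}K$; ruling this out requires exploiting where inside $A=[y-t,y-\tfrac{2t}{3}]$ the mass actually lies. This is exactly what the paper's Case 3 of Lemma \ref{mainclaim} does: it halves $A$ dyadically from the right, locates the first piece $A_k$ containing a point with $f\geq K$, and shows $M^\alpha f\geq K$ on all of $A_k\cup\dots\cup A_1\cup B\cup C\cup D$, so that this whole interval, and hence the good point, must lie inside $(p,q)$. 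Your sketch has no substitute for this step.

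Even granting the unproven sub-case claim, the conclusion you draw from it is insufficient. Knowing $M^\alpha f(p)\geq K-O(\varepsilon)$ only shows that the $p$-side of the peak variation $2M^\alpha f(r)-M^\alpha f(p)-M^\alpha f(q)$ is $O(\varepsilon)$; the term $M^\alpha f(r)-M^\alpha f(q)$ remains of order one and still has to be paid for by the variation of $f$, which again requires a point $r'\in(p,q)$ where $f$ is close to $K$ (or a separate interleaving argument via Proposition \ref{extremos} that you would need to set up carefully across adjacent peaks). So the peak contribution is not simply ``absorbed into the error.'' To repair the proof, replace the shifted-ball fallback with the paper's three-case analysis of the subintervals $A,B,C,D,E,F$ of width $t/3$, culminating in the dyadic subdivision of $A$.
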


\begin{proof}

Let $S=\{ x_1<x_2<...<x_N \}$ be a finite ordered subset of $\R$. We can then say that, by definition of total variation, $\var(M^{\alpha}f)$ is the supremum over all such subsets of the variation of $M^{\alpha}f$ between consecutive points in the sequence. For this reason, if we can bound the total variation of $\ma f$ over any such sequence by $\var (f)$, we achieve the desired inequality.

To facilitate the notation in this proof, we identify the last point in one peak with the first of the next one. This meansthat, the $q_i$,as it was defined in the original definition of peaks, is going to stay implied as equal to $p_{i+1}$ here. 

Using Proposition \ref{divide_int}, we can divide $S$ into up to three parts: a beginning, where the maximal function decreases from $x_1$ to $x_k$; an intermediate system of peaks $ \Pe = \{p_0 < r_0 < p_1 < ... < r_n < p_{n+1} \}$, where $p_0=x_k$, and $p_{n+1}=x_j$; and an increasing part at the end between $x_j$ and $x_N$. 

To control the variation inside the system of peaks, we can use the following Lemma:

\begin{comment}

Additionally, note that the variation of the function $f$ is at least as great as the supremum of the variation of $f$ over a family of sequences.

\begin{lemma}
\label{mclaim}
For each system of peaks $\p$ for the maximal function we can find a set of systems of peaks $\A(\p)= \{\p^j = \{ a_0^j < b_0^j < a_1^j < b_1^j < ... < b_n^j < a_{n+1}^j \}\}$, where each $\p^j$ is a system of peaks with $2n+1$ points. If the choice is such that the variation of $\ma f$ in $\p$ is smaller or equal to the supremum of the variation of $f$ over all $\p^j$, then \ref{angle} holds true.
\end{lemma}
\begin{proof}
$$\var(M^{\alpha}f) = \sup_{\p} \var_{\p}(\ma f) \leq \sup_{\p} \sup_{\p ' \in \A(\p)} \var_{\p '}(f) \leq \var (f).$$

\end{proof}

\begin{lemma}
\label{mclaim}
If, for all systems of peaks $\Pe = \{ p_0 < r_0 < p_1 < r_1 < ... < r_n < p_{n+1} \}$ of $\ma f$ and for any $\delta > 0$, we can identify a corresponding system of peaks of $f$, $\Pe '= \{ a_0 < b_0 < a_1 < b_1 < ... < b_n < a_{n+1} \}$ such that $\var_\Pe (\ma f) \leq \var_{\Pe '} (f)+\delta$, with $\ma f(p_0)\geq f(a_0)$ and $\ma f(p_{n+1}) \geq f(a_{n+1})$, then we can conclude that \ref{angle} holds.

\end{lemma}
This way, all that is left is to find, given a $\p$, the sequence $\A(\p)$. From now on, lets fix $\p$.
\end{comment}

\begin{lemma}
\label{mainclaim}
Given a system of peaks $\Pe = \{ p_0 < r_0 < p_1 < r_1 < ... < r_n < p_{n+1} \}$ for $\ma f$, and a $\delta >0$, we can find a system of peaks for $f$, $\Pe '= \{ a_0 < b_0 < a_1 < b_1 < ... < b_n < a_{n+1} \}$ such that $\var_\Pe (\ma f) \leq \var_{\Pe '} (f)+\delta$.

\end{lemma}

To prove this lemma, we will describe how to choose each $a_i$ and $b_i$. The idea is to choose them in a way such that
$$
\ma f(p_i) \geq f(a_i)\  \text{and}\  \ma f(r_i) \leq f(b_i)+\frac \delta n.
$$

Using \ref{smaller}, we can easily choose each $a_i$ as an $a$ given in the lemma. If we require the choice of the values for the $b_i$'s to satisfy $b_{i-1} < p_i < b_i$, then the open interval between both points will give us a possible $U$.

The proof is therefore reduced to choosing each $b_i$. Start by fixing an $i$, and define $r = r_i$. We will proceed to find a $b$ for the corresponding $r$ such that $b\in (p_i , p_{i+1} )$.

By \ref{casos_maximal}, the maximal function on $r$ cannot be attained for $t\rightarrow \infty$, as it has a value larger than the neighbouring points. The search for $b$ is therefore reduced for when either the value for the maximal function is given by fixed $t$, and $y$, or as the limit superior when $t\rightarrow 0$.

Now, look at the case $\ma f(r) = \limsup_{(t,y)\rightarrow (0,r) } \frac{1}{2t} \int_{y-t}^{y+t} |f(s)| \d s$. Because we are taking the limit when $t$ is going to zero, we can take a $(t,y)$ such that $(y-t, y+t) \subset (p_i,p_{i+1})$ and 
$$
\frac{1}{2t} \int_{y-t}^{y+t} |f(s)| \d s \geq \ma f(r)-\frac \delta n .
$$
Then, we can choose $b \in (y-t, y+t)$ such that $f(b) \geq \frac{1}{2t} \int_{y-t}^{y+t} |f(s)| \d s$. This solves the problem of choosing the $b$ in this case.

What is left now is to describe the choice of $b$ in the case where, for a particular width $t$ and a particular point $y$, we have:
$$
\ma f(r) = \frac{1}{2t} \int_{y-t}^{y+t} |f(s)| \d s.
$$

Let $A= [y-t, y-\frac 23 t]$, $B=[y-\frac 23 t, y-\frac 13 t]$, $C= [y-\frac 13 t, y]$, $D= [y, y+\frac 13 t]$, $E=[y+\frac 13 t, y+\frac 23 t]$ and $F= [y+\frac 23 t, y+t]$. 

The objective is to identify an interval $I \ni r$ from which to select a point $b$ such that the maximal function remains greater than $\ma f(r)-\frac \delta n$ inside $I$. Since $\ma f(p_i) < \ma f(r) > \ma f(p_{i+1})$, this setup ensures that both $p_i, p_{i+1}$ lie outside this interval. Thus, if we choose our $b$ within $I$, we only need to verify that $ f(b) \geq \ma f(r)-\frac \delta n$.

To proceed, we will consider 3 distinct cases, each involving a specific choice of $I$ and $b$. Before describing the cases, note that the average in one of the sets $A\cup B \cup C$ or $D \cup E \cup F$ must be at least $\ma f(r)$. This is true because the average of both averages gives us $ \ma f(r)$. Without loss of generality, we assume this set is $A\cup B \cup C$.

Case 1: The average of the function in $C$ or in $D$ is larger than $\ma f(r)$. 

In this case, our interval will be $ I=C\cup D$, and we choose $b$ equal to a point $p$ such that $f(p) \geq \ma f(r)$. 

The conditions above are verified: $r\in I$ is true by the definition of our maximal function with $\alpha \geq \frac 13$. Additionally, by the definition, the maximal function over $I$ is at least the value of the maximal function at $r$. The selection of the point $p$ relies on our case hypothesis and the fact that at least one point within $I$ where $f$ is at least the average over $I$. In this case, we are done.

Case 2: Both the averages of the function over $C$ and over $D$ are smaller than $\ma f(r)$. But, the average over $B$ is larger.

In this case, our interval is going to be $ I= B \cup C \cup D$. As in the previous case, we have that $r\in I$. The maximal function is above or equal to $\ma f(r)$ in $I$. For $C \cup D$, we have already verified in the previous case, and in $B$ it must be, as the average of the function over $A\cup B \cup C$ is larger than $\ma f(r)$, and this average is a lower bound for the maximal function at any point in $B$. Assuming that the average over $B$ is larger than $\ma f(r)$ lets us choose a point $p\in B \subset I$ such that $f(p) \geq \ma f(r)$.

Case 3: Both the averages of the function over $C$ and $B$ are smaller than $\ma f(r)$.

To construct an appropriate interval, we analyze the values of the function within $A$. Define $A_1$ as the second half of $A$. If there exists a point in $A_1$ where $f \geq \ma f(r)$, set $I=A_1 \cup B \cup C \cup D$. Otherwise, consider $A_2$ as the second half of $A\setminus A_1$. If a point in $A_2$ satisfies $f \geq \ma f(r)$, then set $I=A_2 \cup A_1 \cup B \cup C \cup D$. Continue halving the remaining interval and repeating this process until finding the smallest $k$ such that a point in $A_k$ satisfies $f \geq \ma f(r)$. Then, set $I=A_k \cup \dots \cup A_1 \cup B \cup C \cup D$.
This $k$ must exist and is finite because there must exist a point in $A$ where $f$ exceeds its average over $A$.
As in the other cases, $r\in I$. By construction, the maximal function over $I$ is larger than or equal to $\ma f(r)$. We can then choose the point $p$ in $A_k$ where $f(p) \geq \ma f(r)$.

Repeat this process for each $b_i$, and then select each $a_i$ within $(b_{i-1},b_i)$ using Lemma \ref{smaller}, with exceptions for the first and last points, which can can choosen from $(-\infty,b_1)$ and $(b_n,\infty)$, respectively.

\end{proof}

Thus, we have proven Lemma \ref{mainclaim} for $\delta>0$. Next, we aim to use Proposition \ref{extremos} to control the variation of $\ma f$ over the points in $S$ which are before, and after the system of peaks. By Proposition \ref{divide_int}, there are three different possible scenarios:

\begin{enumerate}
    \item The partition is given only by the system of peaks. The case $k=1$ and $j=N$
    In this case, we can use Lemma \ref{mainclaim} directly and find that:
    $$
    \sum_{i=1}^{N-1} |M^{\alpha}f(x_{i+1})-M^{\alpha}f(x_i)| \leq \var_\mathbbm{P}(M^{\alpha}f) \leq \var f + \delta '.
    $$
    Note that applying the Lemma directly yields $\var f + N\delta$, where $N$ denotes the number of peaks in the system. Since $N$ is finite,  $\forall \delta '>0$, choose a sufficiently small $\delta$ such that $N\delta < \delta '$.

    \item The partition includes the system of peaks and at least one point outside it. We will demonstrate the argument for the case where $1 < k < j < N$. The cases where $k=1$, or $j=N$, can be handled with analogous reasoning. 
    In this case, we have that:
    $$
    \sum_{i=1}^{N-1} |M^{\alpha}f(x_{i+1})-M^{\alpha}f(x_i)| \leq M^{\alpha}f(x_1)-M^{\alpha}f(x_k)+ \var_\mathbbm{P}(M^{\alpha}f) + M^{\alpha}f(x_N)-M^{\alpha}f(x_j).
    $$
    We would like to have a partition $S'= \{ y_1<y_2<...<y_{2n+3} \}$, such that:
    $$
    M^{\alpha}f(x_1)-M^{\alpha}f(x_k)+ \var_\mathbbm{P}(M^{\alpha}f) + M^{\alpha}f(x_N)-M^{\alpha}f(x_j) \leq \sum_{i=1}^{2n+3} |f(y_{i+1})-f(y_i)|.
    $$
    Using Lemma \ref{mainclaim}, we can control $ \var_\mathbbm{P}(M^{\alpha}f) \leq \var_{\Pe '}(f)$. To construct $\Pe '$ assign the middle points of $S'$ as follows: set $y_2=a_1$, $y_3=b_1$, and continue the pattern until $y_{2n+2}=a_{n+2}$. 
    Next, we address the control of $ M^{\alpha}f(x_1)-M^{\alpha}f(x_k) $ and $M^{\alpha}f(x_N)-M^{\alpha}f(x_j)$.
    For $ M^{\alpha}f(x_1)-M^{\alpha}f(x_k) $, fix any $\delta '$ as before. Choose $\epsilon >0$ such that $a_1+\epsilon < b_1 =y_3$. Using Lemma \ref{extremos} select $ w < z < y_3$ such that $ f(w)\geq M^{\alpha}f(x_1)-\delta '$, and $f(z) \leq M^{\alpha}f(x_k)$. assign $y_1 = w$ and redefine $y_2 = z$. 
    A similar argument can be applied using Lemma \ref{extremos} to define $y_{2n+2}$ and $y_{2n+3}$. This allows us to use the variation of $f$ over the partition given by the $y_i$'s to bound the variation of $M^{\alpha}f$ over $S$ minus three times $\delta '$.
    This will work because we have:
    \begin{equation} 
        \begin{split}
            \var_\mathbbm{P}(M^{\alpha}f)  & \leq \sum_{i=2}^{2n+2} |f(y_{i+1})-f(y_i)| +\delta ', \\
            M^{\alpha}f(x_1)-M^{\alpha}f(x_k) &  \leq f(y_1)-f(y_2)+\delta ', \\
            M^{\alpha}f(x_N)-M^{\alpha}f(x_j) &\leq f(y_{2n+3})- f(y_{2n+2})+\delta '.
        \end{split}
    \end{equation}

    \item The partition has no system of peaks. This corresponds to the case $k\geq j$
    In this scenario, the maximal function on the set $S$ is either monotonic, or it has a single turning point: decreases up to a point, and increases starting at that point. 
    The first case is solved with a direct application of Proposition \ref{extremos} with any $\delta >0$ and any $\epsilon >0$. 
    In the second case, where the maximal function decreases and then increases, we apply Proposition \ref{extremos} twice: once to control $M^{\alpha}f(x_1)-M^{\alpha}f(x_j)$, and again to control $M^{\alpha}f(x_N)-M^{\alpha}f(x_k)$. Then pick $y_1 < y_2 \leq y_3 < y_4$ as the points given by both instances of the Lemma, for any $\delta>0$, and for $\epsilon$ small enough such that $x_j+\epsilon < x_k -\epsilon$, or any value if they coincide. 
    This construction leads to:
    \begin{equation} 
        \begin{split}
            M^{\alpha}f(x_1)-M^{\alpha}f(x_j) &  \leq f(y_1)-f(y_2)+\delta ', \\
            M^{\alpha}f(x_N)-M^{\alpha}f(x_k) & \leq f(y_{4})- f(y_{3})+\delta '.
        \end{split}
    \end{equation}
    Now, $\forall \delta >0$, we can choose $\delta '$ to be as small as needed in order for it to be smaller than one third of $\delta$.

\end{enumerate}

Now, we have proven that, for any $\delta > 0$, $\var_S(M^{\alpha}f) \le \var(f) + \delta$, we can directly conclude that $\var_S(M^{\alpha}f) \le \var(f)$. 
If the inequality holds for any partition $S$, it must also hold for the total variation of $M^{\alpha}f$, and we get \ref{mainteo}.

\subsection*{Acknowledgments} The author acknowledges the support of the Fundação para a Ciência e a Tecnologia (FCT) through the LISMATH program PhD grant PD/BD/150367/2019 and by FCT/Portugal through project UIDB/04459/2020 with DOI identifier 10-54499/UIDP/04459/2020.


\begin{thebibliography}{99}

\bibitem{AL}
J. M. Aldaz and J. Pérez Lázaro,
\newblock \textit{Functions of bounded variation, the derivative of the one dimensional maximal function, and applications to inequalities},
\newblock  Trans. Amer. Math. Soc. 359 (2007), no. 5, 2443–2461.
\begin{comment}
  

\bibitem{am}
L. Ambrosio, N. Fusco and D. Pallara,
\newblock \textit{Functions of Bounded Variation and Free Discontinuity Problems},
\newblock  Clarendon Press, Oxford Science Publications (2000)
\end{comment}

\bibitem{AP} 
J. M. Aldaz and J. P\'{e}rez L\'{a}zaro, 
\newblock \textit{Functions of bounded variation, the derivative of the one dimensional maximal function, and applications to inequalities},
 \newblock Trans. Amer. Math. Soc. 359 (2007), no. 5, 2443--2461.

\bibitem{BW} 
C. Bilz and J. Weigt,  
\newblock \textit{The one-dimensional centred maximal function diminishes the variation of indicator functions},
\newblock preprint (2021), arxiv.org/abs/2107.12404.


\bibitem{BCHP}
J. Bober, E. Carneiro, K. Hughes, and L. B. Pierce,
\newblock \textit{On a discrete version of Tanaka's theorem for maximal functions},
\newblock Proc. Amer. Math. Soc. 140 (2012), 1669-1680.


\bibitem{Ca}
E. Carneiro,
\newblock \textit{Regularity of maximal operators: recent progress and some open problems},
\newblock New Trends in Applied Harmonic Analysis, vol. 2 (2019), pp. 69--92.


\bibitem{CGRM} 
E. Carneiro, C. Gonz{\'a}lez-Riquelme, and J. Madrid,  
\newblock \textit{Sunrise strategy for the continuity of maximal operators},
\newblock J.~Anal. Math.~{\bf 148} (2022), 37--84.


\bibitem{CH}
E. Carneiro and K. Hughes,
\newblock \textit{On the endpoint regularity of discrete maximal operators}, 
\newblock Math. Res. Lett. 19, no. 6 (2012), 1245--1262.

\bibitem{CM}
E. Carneiro and D. Moreira, 
\newblock \textit{On the regularity of maximal operators},
\newblock Proc. Amer. Math. Soc. 136 (2008), no. 12, 4395--4404.

\bibitem{CMa}
E. Carneiro and J. Madrid, 
\newblock \textit{Derivative bounds for fractional maximal functions},
\newblock Trans. Amer. Math. Soc. 369 (2017), no. 6, 4063--4092.

\bibitem{CMP}
E. Carneiro, J. Madrid and L. B. Pierce,
\newblock \textit{Endpoint Sobolev and BV continuity for maximal operators},
\newblock J. Funct. Anal. 273 (2017), 3262--3294.


\bibitem{GR} 
C. Gonz{\'a}lez-Riquelme,  
\newblock \textit{Continuity for the one-dimensional centered Hardy-Littlewood maximal operator at the derivative level},
\newblock J.~Funct.~Anal.~{\bf 285} (2023), 110097.

\bibitem{GRK} 
C. Gonz{\'a}lez-Riquelme and D. Kosz,  
\newblock \textit{BV continuity for the uncentered Hardy--Littlewood maximal operator},
\newblock J.~Funct. Anal.~{\bf 281} (2021), 109037.

\bibitem{HO}
P. Hajlasz and J. Onninen,
\newblock \textit{On boundedness of maximal functions in Sobolev spaces},
\newblock Ann. Acad. Sci. Fenn. Math. 29 (2004), no. 1, 167--176.

\bibitem{Ki}
J. Kinnunen, 
\newblock \textit{The Hardy-Littlewood maximal function of a Sobolev function},
\newblock Israel J. Math. 100 (1997), 117--124.

\bibitem{KiSa}
J. Kinnunen and E. Saksman,
\newblock \textit{Regularity of the fractional maximal function},
\newblock Bull. London Math. Soc. 35 (2003), no. 4, 529--535. 

\bibitem{Ko}
S. Korry,
\newblock \textit{A class of bounded operators on Sobolev spaces},
\newblock Arch. Math. (Basel) 82 (2004), no. 1, 40--50. 

\bibitem{Ku}
O. Kurka,
\newblock \textit{On the variation of the Hardy-Littlewood maximal function},
\newblock Ann. Acad. Sci. Fenn. Math. 40 (2015), 109--133.

\bibitem{LW}
F. Liu and H. Wu,
\newblock \textit{On the regularity of the multisublinear maximal functions},
\newblock Canad. Math. Bull. 58 (4) (2015), 808--817.

\bibitem{M}
J. Madrid, 
\newblock \textit{Sharp inequalities for the variation of the discrete maximal function}, 
\newblock Bull. Aust. Math. Soc. 95 (2017), no. 1, 94--107.

\bibitem{PPSS}
C. P\'{e}rez, T. Picon, O. Saari and M. Sousa,
\newblock \textit{Regularity of maximal functions on Hardy-Sobolev spaces},
\newblock Bull. Lond. Math. Soc. 50 (2018), no. 6, 1007--1015.

\bibitem{JPR}
J. Ramos,
\newblock \textit{Sharp total variation results for maximal functions},
\newblock Ann. Acad. Sci. Fenn. Math. 44 (2019) 41-64

\bibitem{Ta}
H. Tanaka,
\newblock \textit{A remark on the derivative of the one-dimensional Hardy-Littlewood maximal function},
\newblock Bull. Austral. Math. Soc. 65 (2002), no. 2, 253–258.

\bibitem{Te}
F. Temur,
\newblock \textit{On regularity of the discrete Hardy-Littlewood maximal function},
\newblock preprint at http://arxiv.org/abs/1303.3993.







\end{thebibliography}
\end{document}